\newtheorem{theorem}{Theorem}
\newtheorem{lemma}{Lemma}
\newtheorem{remark}{Remark}
\newtheorem{definition}{Definition}
\title{When are Bayesian model probabilities overconfident?}
\author[a]{Oscar Oelrich}
\author[b]{Shutong Ding}
\author[c]{Måns Magnusson}
\author[c]{Aki Vehtari}
\author[a,d]{Mattias Villani} 
\affil[a]{Department of Statistics, Stockholm University}
\affil[b]{Department of Statistics, International Labour Organization}
\affil[c]{Department of Computer Science, Aalto University}
\affil[d]{Department of Computer and Information Science, Linköping University}
\keywords{Model comparison $|$ Bootstrap $|$ DSGE $|$ Macroeconomic policy $|$ Brain connectivity} 
\begin{abstract}
Abstract. Bayesian model comparison is often based on the posterior distribution over the set of compared models. This distribution is often observed to concentrate on a single model even when other measures of model fit or forecasting ability indicate no strong preference. Furthermore, a moderate change in the data sample can easily shift the posterior model probabilities to concentrate on another model. We document overconfidence in two high-profile applications in economics and neuroscience. To shed more light on the sources of overconfidence we derive the sampling variance of the Bayes factor in univariate and multivariate linear regression. The results show that overconfidence is likely to happen when i) the compared models give very different approximations of the data-generating process, ii) the models are very flexible with large degrees of freedom that are not shared between the models, and iii) the models underestimate the true variability in the data.
\end{abstract}
\begin{document}

\maketitle
\thispagestyle{firststyle}
\ifthenelse{\boolean{shortarticle}}{\ifthenelse{\boolean{singlecolumn}}{\abscontentformatted}{\abscontent}}{}

\dropcap{B}ayesian inference has gained widespread popularity in recent decades,
largely propelled by advances in computing power and efficient simulation
algorithms like Markov Chain Monte Carlo \cite{brooks2011handbook}
and Sequential Monte Carlo \cite{doucet2000sequential}. The Bayesian
approach to model comparison is theoretically attractive and the standard
in many fields in both the natural and the social sciences. Some examples
are \cite{griffiths2004finding} in Linguistics, \cite{Smets2007}
in Economics, and \cite{stephan2009bayesian} in Neuroscience. Hypothesis
testing is a special case of model selection. Problems with
classical hypothesis testing and the so called reproducibility crisis
has directed attention to Bayesian model selection as an alternative,
see for example \cite{raftery1995bayesian} and \cite{johnson2013revised}.

A posterior distribution over a set of models makes it
straightforward to select one of the models for further study, or to
average inference across the models using Bayesian Model Averaging
\citep[BMA,][]{Hoeting1999}. Bayesian model comparison based on posterior model probabilities has many attractive
properties: i) it allows the compared models to be non-nested, ii)
it is consistent when the data generating process is among the compared
models (the $\mathcal{M}$-closed perspective in \citealp{Bernardo1994}),
iii) it will asymptotically concentrate the posterior probability
mass on the model closest to the data generating process when all
compared models are misspecified  ($\mathcal{M}$-open perspective in \citealp{Bernardo1994}), and iv) it has  direct connections to out-of-sample
forecasting performance and cross-validation \cite{Gelfand1994,Bernardo1994,geweke1999using,Vehtari+Ojanen:2012,fong2019marginal}.

However, Bayesian model probabilities sometimes behave puzzling in practise
in that the posterior model distribution often concentrates entirely
on one model, giving the impression of overwhelming support for that
model. At the same time other forms of model comparison, e.g. predictive
performance on a test set, do not show nearly the same degree of discrimination.
This overconfidence is part of the folklore among expert Bayesians,
but remains largely undocumented in the scientific literature, barring
brief passages such as in \cite{Yao2018} or \cite{li2019comparing} who state 'In practice
we have observed a tendency of BMA to be over confident in weighting
models\textemdash assigning weights that are too close to zero or
one'. Moreover, theoretical work on overconfidence in Bayesian model
probabilities is scarce. Two recent exceptions are \cite{Yang2018a} and \cite{huggins2019using}. The asymptotic behavior of Bayesian model comparison when
the compared models are equally misspecified is explored in \cite{Yang2018a}, showing random walk
like behavior of the log Bayes factor in large samples. The overconfidence of Bayesian posteriors and Bayesian model probabilities is also highlighted by \cite{huggins2019using} who use bagging of posteriors to make Bayesian inference more robust.

Our paper sheds light on the sources of overconfidence of Bayesian model probabilities
by deriving the sampling variance of the Bayes factor in linear regression.
We show that overconfidence is likely to be happen when i) the compared
models give very different approximations of the data-generating
process, ii) the models are very flexible, i.e. have large
degrees of freedom, and that complexity is not shared between the
models, and iii) the models are unable to replicate the variability in the data generating process. We also extend the results to multivariate regression.

The next section motivates our study by showing disturbingly clear signs of overconfidence in a high-profile
applications in macroeconomics \cite{Smets2007} and neuroscience \cite{leff2008cortical}. The rest of the paper studies the sources of overconfidence mathematically through the between-sample variance of the Bayes factor for Bayesian linear regression models. Proofs of the results are given in the Supplementary material.

\section*{Background and motivation}

\subsection*{Bayesian model probabilities}

Consider comparing a set of $K$ models, $\mathcal{M}=\left\{ M_{1},...,M_{K}\right\} $,
for the observed data $y=(y_{1},...,y_{n})^{T}$, each depending
on a vector of model parameters $\theta_{k}$ . A common way of doing Bayesian model comparison is to use the posterior distribution over the model
set $\mathcal{M},$
\[
p(M_{k}\vert y)\propto p(y\vert M_{k})p(M_{k}),
\]
where $p(M_{k})$ is the prior probability of model $M_{k}$, $p(y\vert M_{k})$
is the \emph{marginal likelihood}
\[
p(y\vert M_{k})=\int p(y\vert\theta_{k},M_{k})p(\theta_{k} \vert M_{k})d\theta_{k},
\]
and $p(\theta_{k}\vert M_k)$ is the prior distribution for $\theta_{k}$.
The Bayes factor for comparing model $M_{k}$ to model $M_l$
is 
\[
\mathrm{B}_{k,l}=\frac{p(y\vert M_{k})}{p(y\vert M_l)}.
\]

\subsection*{Micro-based general equilibrium models in macroeconomics}

To illustrate the effect of overconfidence in real-world applications,
we first consider a class of Dynamic Stochastic General Equilibrium \cite{herbst2015bayesian}
models widely used in economics. DSGE models are the main models used
for policy analysis and prediction at essentially every major monetary
and fiscal institution in the world. Bayesian model probabilities is the
standard tool for model comparison and selection among DSGE models \cite{herbst2015bayesian}.
The seven-variable Smets-Wouters \cite{Smets2007} model is the de
facto  starting point for most DSGE models used in practical
work. The Smets-Wouters model is a probability model for seven macroeconomic
time series using a complex micro-funded model based on optimizing
representative agents in the economy with rational expectations. The
model dynamics are driven by seven underlying latent time series shocks,
such as shocks in technology and preferences. In \cite{Smets2007},
the base version of the model is compared with eight variants that
restricts certain model parameters to known values. The posterior
model probabilities based on the marginal likelihoods from \cite{Smets2007}
is given in Table \ref{table:SmetsWoutersPMP}, showing conclusive
evidence in favor of model $M_{3}$.

\begin{table}[H]
\caption{Posterior model probabilities in the DSGE example.}
\label{table:SmetsWoutersPMP}
\centering{}%
\begin{tabular}{ccccccccc}
\toprule
Base & M1 & M2 & M3 & M4 & M5 & M6 & M7 & M8\tabularnewline
0.01 & 0.00 & 0.00 & 0.99 & 0.00 & 0.00 & 0.00 & 0.00 & 0.00\tabularnewline
\bottomrule
\end{tabular}
\end{table}

To investigate if overconfidence is a concern we approximate the sampling 
distribution of the posterior model probabilities using the circular block 
boostrap for time series \cite{politis1994stationary}. Figure \ref{figure:SmetsWoutersPMPboot} shows
results from $1000$ bootstrap replicates. The vertical bars correspond to
models and the horizontal stripes to bootstrap replicates. The bootstrap
replicates have been sorted with respect to the probabilities of the
baseline model. The colors represent the posterior model probabilities.
For example, a row where one of the columns has a stripe of dark purple
implies that the model in that column has strong support ($\mathrm{Pr}(M_{k}\vert\mathrm{Data})>0.9$),
for the given bootstrap replicate, and all other models weak support ($\mathrm{Pr}(M_{k}\vert\mathrm{Data})\leq0.1$).
Figure \ref{figure:SmetsWoutersPMPboot} shows that the conclusion
from Table \ref{table:SmetsWoutersPMP}, where model $M_{3}$ came
out as the sure winner, is far from robust. In a large fraction of
the bootstrap samples, we actually have $\mathrm{Pr}(M_{7}\vert\mathrm{Data})>0.9$;
there is also the same level of support for the base model in a non-negligible
fraction of bootstrap replicates. 

Figure \ref{figure:DSGEconclusive} displays the percentage of bootstrap replicates where there is strong
support for one of the models (the posterior model probability, PMP, is larger than 0.9, 0.95 and 0.99,
respectively), or where the evidence is inconclusive. If we take the outcome that one of the PMPs are larger
than $0.99$ as conclusive, then Figure \ref{figure:DSGEconclusive}
shows that we have conclusive evidence in $35$\% of the replicates,
but spread over 5 different models. We therefore conclude that the
model comparison in \cite{Smets2007} suffers from overconfidence
with misleadingly conclusive support for $M_{3}$. The authors of \cite{Smets2007}
seem in fact unimpressed by the strong support for model $M_{3}$
as they silently continue the remainder of the article with the analysis
of the base model.

\begin{figure}
\centering
\includegraphics[width=.95\linewidth]{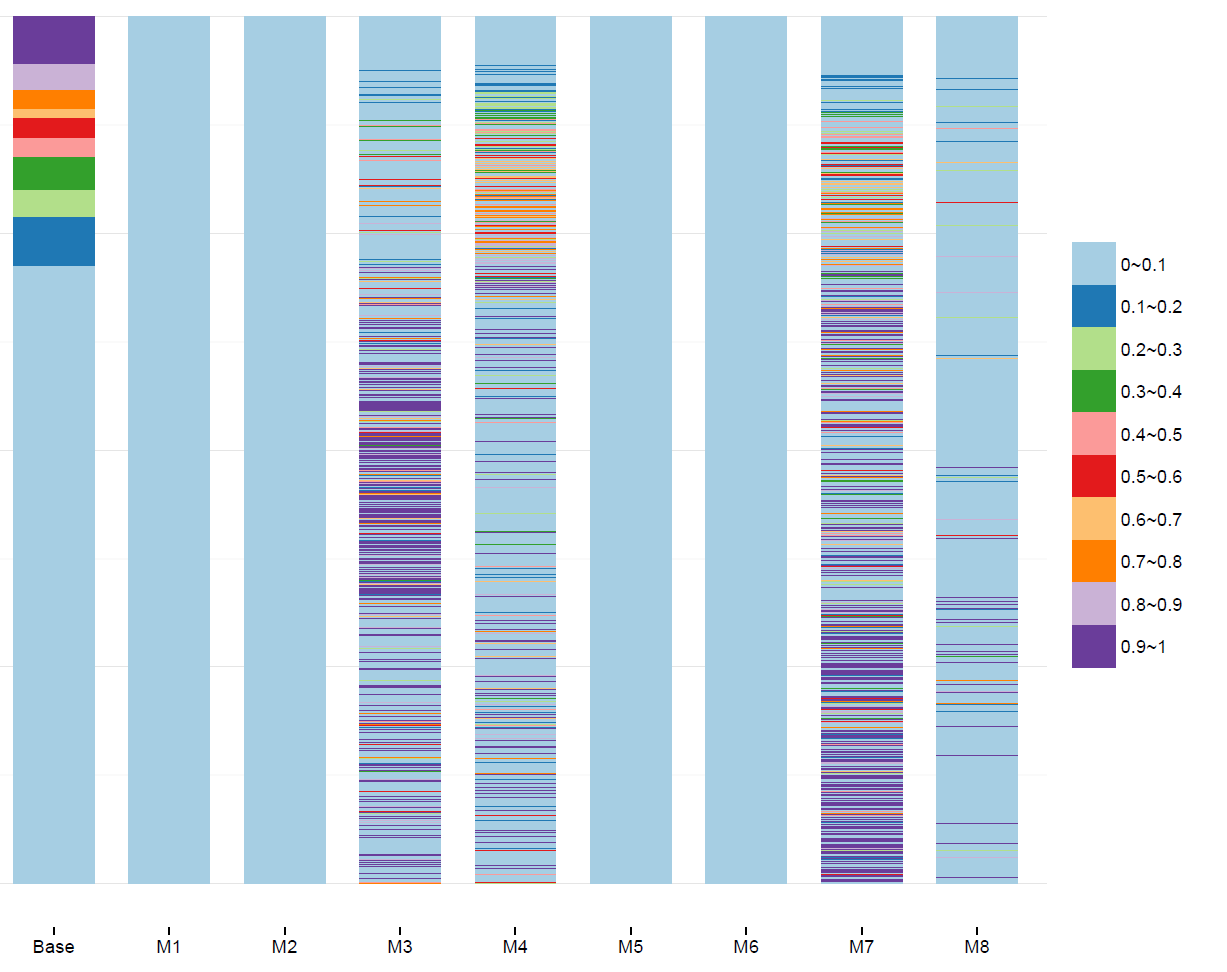}
\caption{Posterior model probabilities for the nine DSGE models in \cite{Smets2007}
for $1000$ bootstrap replicates. The bars correspond to models and
the horizontal stripes to bootstrap replicates, which have been sorted
with respect to the probabilities of the base model. The colors correspond
to posterior model probabilities. For example, a row where one of
the bars has a stripe of dark purple implies that the model corresponding to that
bar has strong support ($p(M_{k}\vert y)>0.9$)
for the given bootstrap replicate, and all other models weak support
($p(M_{k}\vert y)\protect\leq0.1$).}
\label{figure:SmetsWoutersPMPboot}
\end{figure}

\begin{figure}
\centering
\includegraphics[width=.9\linewidth]{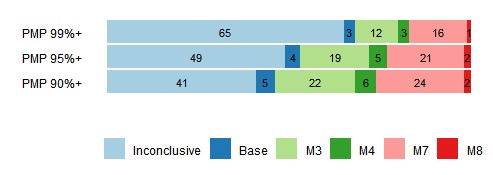}
\caption{Sampling variability in the DSGE model selection process for three posterior model probability (PMP) thresholds. The figure displays the percentage of bootstrap samples in which one model is strongly preferred, or in which the evidence is inconclusive.}
\label{figure:DSGEconclusive}
\end{figure}

\subsection*{Causal brain interactions in neuroscience}
Dynamic Causal Models (DCM) \cite{friston2003dynamic} is a popular class of models that use data from functional MRI (fMRI) brain scans to investigate how brain regions interact during an experimental task. Bayesian posterior model probabilities are the recommended method of model comparison for DCMs \cite{ashburner2014spm12}[Ch.37]. DCMs are used in \cite{leff2008cortical} to analyze how three brain regions that are known to be associated with speech processing interact when hearing intelligible speech. We reanalyze their data, but exclude subject 5 since it is a duplicate of subject 4. This leaves 25 subjects for the analysis.

Figure \ref{figure:DCMs} displays two of the compared models. A particular question of interest is in which of the three regions the auditory input is localized, regardless of the presence or absence of connectivity patterns. Such posterior probabilities are obtained by summing over all possible connections for a given source location, and is presented in Table \ref{table:DCM_PMP}, which is identical to Table 1 in \cite{leff2008cortical} even with subject 5 removed. According to Table \ref{table:DCM_PMP} we are supposed to be absolutely certain that input only enters through region P. Figure \ref{figure:DCMconclusive} however shows that in a non-negligible fraction of bootstrap samples we actually obtain conclusive evidence for region A. Figure shows the bootstrapped sampling distribution for the log Bayes factor comparing the hypothesis P against A. The regions of evidence from the well known conservative Kass-Raftery scale \citep{Kass1995} are also indicated in the figure. The sampling variance is very large, and the regions of weak or inconclusive support is but a small interval between large masses of very strong support for either of the two hypotheses. Most of the mass for strong evidence happens to be located on P in this example, but it is clear that the Bayes factor can very easily be overconfident.

\begin{figure}
\centering
\includegraphics[width=.8\linewidth]{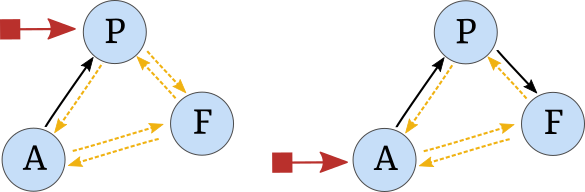}
\caption{Two of the compared DCM models for the three brain auditory brain regions: i) left posterior temporal sulcus (P), ii) left anterior superior
temporal sulcus (A) and iii) pars orbitalis of the inferior frontal gyru (F). Dashed orange arrows are endogenous task-unrelated connections which are present in all models, while solid black arrows are connections whose strength is modulated by the hearing task. The location of the auditory input is indicated by the red square with an arrow.}
\label{figure:DCMs}
\end{figure}

\begin{table}[H]
\caption{Posterior model probabilities in the DCM example.}
\label{table:DCM_PMP}
\centering{}%
\begin{tabular}{ccccccc}
\toprule
A & F & P & AF & PA & PF & PAF\tabularnewline
0.00 & 0.00 & 1.00 & 0.00 & 0.00 & 0.00 & 0.00\tabularnewline
\bottomrule
\end{tabular}
\end{table}

\begin{figure}
\centering
\includegraphics[width=.9\linewidth]{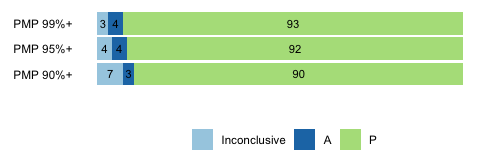}
\caption{Sampling variability in the DCM model selection process for three posterior model probability (PMP) thresholds. The figure displays the percentage of bootstrap samples in which one model is strongly preferred, or in which the evidence is inconclusive.}
\label{figure:DCMconclusive}
\end{figure}

\begin{figure}
\centering
\includegraphics[width=.9\linewidth]{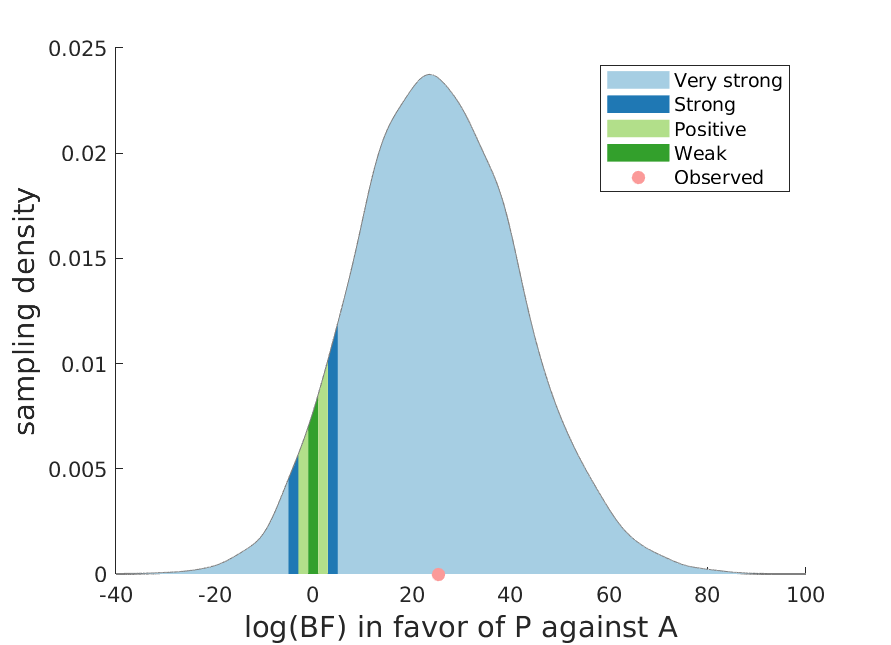}
\caption{Bootstrap approximation of the sampling distribution of $\log \mathrm{BF}_{P,A}$ comparing the family of models where the auditory input is localized only in region P against the family of models where it is localized only in region A. The sampling distribution is approximated by $10000$ bootstrap resamples of subjects. The Kass-Raftery scale of evidence is displayed by the colored regions. The observed $\log \mathrm{BF}_{P,A}$ is marked out by a pink dot.}
\label{figure:NeuroBFhistogram}
\end{figure}

\section*{Overconfidence in Gaussian linear regression models}

\subsection*{Univariate response}

We explore the sources of overconfidence of Bayesian model comparison
by deriving the sampling variance of the Bayes factor for two compared
models in a finite sample setting. To get tractable and easily interpretable results we consider Gaussian linear regression models with known error variances $\sigma_{1}^{2}$ and $\sigma_{2}^{2}$; Remark \ref{remark:unknownParameters} discusses the case with unknown variances. The data generating process
$M_{*}$ is given by

\begin{equation}
M_{*}:\qquad y=X_{*}\beta_{*}+\varepsilon_{*}\quad\varepsilon_{*}\sim \operatorname{N}(0,\sigma_{*}^{2}I_{n}),\label{eq:trueDGPLinReg}
\end{equation}
where $X_{*}$ is a $n\times p_{*}$ matrix of full rank. We compare
the misspecified models

\begin{align}
M_{1}:\qquad & y=X_{1}\beta_{1}+\varepsilon_{1}\quad\varepsilon_{1}\sim \operatorname{N}(0,\sigma_{1}^{2}I_{n})\label{eq:reg_Gaussian}\\
M_{2}:\qquad & y=X_{2}\beta_{2}+\varepsilon_{2}\quad\varepsilon_{2}\sim \operatorname{N}(0,\sigma_{2}^{2}I_{n}),\nonumber 
\end{align}
where $X_{i}$ is known and of full rank $p_{i}$, and $\beta_{i}$
is a $p_{i}\times1$ vector of unknown regression coefficients. 

We use Zellner's g-prior for both models
\begin{equation*}
\beta_{i}  \vert\sigma_{i}^{2}\sim \operatorname{N}\left(0,g\sigma_i^{2}(X_{i}^{T}X_{i})^{-1}\right)
\end{equation*}
for some shrinkage constant $g>0$, but it is straightforward to extend the results to general normal priors. The posterior is of the form

\begin{equation*}
\beta_{i}|\sigma_{i}^{2},y \sim \operatorname{N}\left(\tilde\beta_i,\kappa\sigma_i^{2}\left(X_{i}^{T}X_{i}\right)^{-1}\right),
\end{equation*}
where 

\begin{equation*}
\tilde\beta_i =\kappa\left(X_{i}^{T}X_{i}\right)^{-1}X_{i}^{T}y,
\end{equation*}
and $\kappa=\frac{g}{g+1}$ is the shrinkage factor. The Bayesian
posterior predictive mean of model $M_{i}$ is a linear smoother
\cite[Ch.~3.10]{ruppert2003semiparametric} of the form $\hat{y}_{i}=H_{i}y$, where $H_{i}=\kappa P_{i}$
is a shrunken version of the least squares projection matrix $P_{i}=X_{i}(X_{i}^{T}X_{i})^{-1}X_{i}^{T}$.

The marginal likelihood for model $M_{i}$ is given by 
\[
p(y|M_{i})=(2\pi\sigma_{i}^{2})^{{-\frac{n}{2}}}\left(1-\kappa\right)^{p_{i}/2}\exp\left\{ -\frac{1}{2\sigma_{i}^{2}}y^{T}\left(I_{n}-H_{i}\right)y\right\}.
\]

Our first result is derived under the assumption that the error variances
in the two models are known and equal. This case gives a particularly simple expression with interesting interpretation. See the Appendix for results when the model variances differ.
Let $\left\Vert x\right\Vert _{2}=(x^{T}x)^{1/2}$ be the Euclidean
norm of the vector $x$ and $\left\Vert A\right\Vert _{F}=\sqrt{\mathrm{tr}(A^{T}A)}$
the Frobenius norm of the matrix $A$.
\begin{theorem}
\label{thm:varBFLinReg}The sampling mean and variance of the log
Bayes factor for the two regression models in \textup{[\ref{eq:reg_Gaussian}]},
assuming equal and fixed variance $\sigma^{2}$, with respect to
the data-generating process in \textup{[\ref{eq:trueDGPLinReg}]} is
\begin{align*}
\mathrm{E}(\log \mathrm{B}_{12}(y)) &=\frac{\mathrm{KL}_{2}-\mathrm{KL}_{1}}{2-\kappa} +\frac{p_{1}-p_{2}}{2}\left(\log(1-\kappa)+\kappa\frac{\sigma_{*}^{2}}{\sigma^{2}}\right)\\
\mathrm{Var}(\log \mathrm{B}_{12}(y)) & =\frac{\sigma_{*}^{2}}{\sigma^{2}}\frac{\left\Vert \hat{\mu}_{1}-\hat{\mu}_{2}\right\Vert _{2}^{2}}{\sigma^{2}} + \left(\frac{\sigma_{*}^{2}}{\sqrt{2}\sigma^{2}}\left\Vert H_{1}-H_{2}\right\Vert _{F}\right)^{2},
\end{align*}
where $\hat{\mu}_{i}=H_{i}\mu_{*}$ is the projection of the true
mean vector $\mu_{*}$ onto model $M_{i}$ and $\mathrm{KL}_i$ is the Kullback-Leibler divergence of model $M_i$ with estimate $\hat\mu_i$ from the true $M_*$.
\end{theorem}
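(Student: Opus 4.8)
The plan is to reduce the log Bayes factor to a single quadratic form in the Gaussian vector $y$ and then apply the standard moment formulas for quadratic forms. Taking the difference of the two log marginal likelihoods, the factors $(2\pi\sigma^2)^{-n/2}$ cancel because the variances are equal, and I obtain
\[
\log \mathrm{B}_{12}(y) = \frac{p_1-p_2}{2}\log(1-\kappa) + \frac{1}{2\sigma^2}\, y^{T}(H_1-H_2)\,y .
\]
Writing $A := H_1-H_2 = \kappa(P_1-P_2)$, the matrix $A$ is symmetric since each $H_i=\kappa P_i$ with $P_i$ a symmetric projection. Thus $\log\mathrm{B}_{12}(y)$ is an affine function of the single quadratic form $y^{T}Ay$, and the entire problem reduces to computing its first two moments under $y\sim\operatorname{N}(\mu_*,\sigma_*^2 I_n)$.

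For symmetric $A$ and $y\sim\operatorname{N}(\mu_*,\sigma_*^2 I_n)$, the textbook identities give $\mathrm{E}(y^{T}Ay)=\sigma_*^2\,\mathrm{tr}(A)+\mu_*^{T}A\mu_*$ and $\mathrm{Var}(y^{T}Ay)=2\sigma_*^4\,\mathrm{tr}(A^2)+4\sigma_*^2\,\mu_*^{T}A^2\mu_*$. Using $\mathrm{tr}(P_i)=p_i$, the trace term in the mean is $\mathrm{tr}(A)=\kappa(p_1-p_2)$; after scaling by $\sigma_*^2/(2\sigma^2)$ and combining with the $\log(1-\kappa)$ constant, this produces the $\frac{p_1-p_2}{2}\big(\log(1-\kappa)+\kappa\,\sigma_*^2/\sigma^2\big)$ summand.

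The only nonroutine step is recognizing the remaining quadratic mean term as a scaled difference of Kullback--Leibler divergences, and I expect this to be the main obstacle. Expanding $\|\hat\mu_i-\mu_*\|_2^2=\mu_*^{T}(\kappa P_i-I)^2\mu_*=\|\mu_*\|_2^2-\kappa(2-\kappa)\,\mu_*^{T}P_i\mu_*$ via $P_i^2=P_i$, and inserting this into the Gaussian divergence $\mathrm{KL}_i=\tfrac12\big[n(\sigma_*^2/\sigma^2-1-\log(\sigma_*^2/\sigma^2))+\|\hat\mu_i-\mu_*\|_2^2/\sigma^2\big]$, the variance and dimension contributions are identical for the two models and cancel in $\mathrm{KL}_2-\mathrm{KL}_1$, leaving $\mathrm{KL}_2-\mathrm{KL}_1=\frac{\kappa(2-\kappa)}{2\sigma^2}\,\mu_*^{T}(P_1-P_2)\mu_*$. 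Since $\tfrac{1}{2\sigma^2}\mu_*^{T}A\mu_*=\tfrac{\kappa}{2\sigma^2}\mu_*^{T}(P_1-P_2)\mu_*$, dividing the KL difference by $2-\kappa$ reproduces it exactly, which is precisely where the denominator $2-\kappa$ comes from.

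Finally, the variance follows by identifying the two remaining matrix functionals. Because $A$ is symmetric, $\mathrm{tr}(A^2)=\mathrm{tr}(A^{T}A)=\|H_1-H_2\|_F^2$, and $\mu_*^{T}A^2\mu_*=\|A\mu_*\|_2^2=\|H_1\mu_*-H_2\mu_*\|_2^2=\|\hat\mu_1-\hat\mu_2\|_2^2$. Substituting into $\mathrm{Var}(\log\mathrm{B}_{12})=\tfrac{1}{4\sigma^4}\mathrm{Var}(y^{T}Ay)$ and rearranging yields the stated two-term expression. Beyond careful bookkeeping I anticipate no genuine difficulty here; the only real subtlety is arranging the KL interpretation so that the factor $2-\kappa$ emerges naturally rather than as an unexplained constant.
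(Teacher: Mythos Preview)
Your proposal is correct and follows essentially the same approach as the paper: both compute moments of Gaussian quadratic forms and then identify the mean term with a scaled KL difference via the idempotence identity $\|\mu_*-\hat\mu_i\|_2^2=\|\mu_*\|_2^2-\kappa(2-\kappa)\,\mu_*^{T}P_i\mu_*$. The only difference is cosmetic: the paper keeps the two quadratic forms $Q_i(y)=\tfrac{1}{2\sigma_i^2}y^{T}(I_n-H_i)y$ separate (so as to also record the $\sigma_1\neq\sigma_2$ case) and then combines via $\mathrm{Var}(Q_1)+\mathrm{Var}(Q_2)-2\,\mathrm{Cov}(Q_1,Q_2)$, whereas you exploit the equal-variance cancellation $(I_n-H_2)-(I_n-H_1)=H_1-H_2$ at the outset to reduce everything to a single quadratic form, which is a tidy shortcut for the stated case.
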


Theorem \ref{thm:varBFLinReg} shows that the Bayes factor favors models that are KL-close to the data-generating process, which is in line with the general asymptotic result in for example \cite{Berk1966,fernandez2004comparing}, but also that it penalizes complex models.

More interestingly, Theorem \ref{thm:varBFLinReg} shows that the variance increases with:
i) $\left\Vert \hat{\mu}_{1}-\hat{\mu}_{2}\right\Vert _{2}^{2}/\sigma^{2}$, 
ii) $\left\Vert H_{1}-H_{2}\right\Vert _{F}$, and iii) the variance ratio $\sigma_*^2/\sigma^2$. We discuss each of these parts in turn.

It is straightforward to prove that $\hat{\mu}_{i}$ minimizes the KL divergence
of $p(y\vert M_{i})$ from $p(y\vert M_{*})$ (see the Appendix); hence, $\hat{\mu}_{i}$
is the best approximation of  $p\left(y\vert M_{*}\right)$
that model $M_{i}$ is capable of. It is also easy to show (see the Appendix) that $\mathrm{KL}(M_{1}(\hat{\mu}_{1})\left\Vert M_{2}(\hat{\mu}_{2})\right)=\left\Vert \hat{\mu}_{1}-\hat{\mu}_{2}\right\Vert _{2}^{2}/2\sigma^{2}$, the KL divergence between
the best approximating models $\operatorname{N}(y\vert\hat{\mu}_{1},\sigma^{2})$
and $\operatorname{N}(y\vert\hat{\mu}_{2},\sigma^{2})$, which happens to be symmetric when $\sigma_1=\sigma_2$.
The first term in Theorem \ref{thm:varBFLinReg} therefore shows that
the variance of the Bayes factor tends to be large when the two models
approximate $M_{*}$ in widely different ways. This explains why continuous model expansion, where a model is embedded in a larger family via a continuous parameter, is preferred over a comparison of a discrete set of well separated models \cite{draper1995assessment}. A model is always surrounded by other similar models in continuous model expansions. We also note that the recommended strategy in \cite{draper1995assessment} is to compare widely different models that 'stake out the corners in the model space' in order to capture the true model uncertainty. This is a good strategy when one can afford to stake out the corners with a dense set of models, preferably even a continuum of models, but this is rarely the case. A much more common situation is when the model space is staked out using a small set of models. Unfortunately, Theorem \ref{thm:varBFLinReg} shows that the posterior model probabilities are then highly likely to be overconfident.

To interpret the term $\left\Vert H_{1}-H_{2}\right\Vert _{F}$
recall that the \emph{degrees of freedom} of a linear smoother, $\hat{y}=Hy$,
is given by $\mathrm{tr}(H)$ \cite{Hastie2009}. The next lemma
shows that $\left\Vert H_{1}-H_{2}\right\Vert _{F}^{2}$ is the total
degrees of freedom of the two models that is not shared between them.

\begin{lemma}
\label{lem:nonsharedComplexity}$\left\Vert H_{1}-H_{2}\right\Vert _{F}^{2}$
measures the total non-shared degrees of freedom of the models in
\textup{[\ref{eq:reg_Gaussian}]}:
\begin{equation*}
\left\Vert H_{1}-H_{2}\right\Vert _{F}^{2}  =\kappa^{2}\left(\mathrm{tr}(P_{1})+\mathrm{tr}(P_{2})-2\left(s+\Sigma_{i=1}^{r}\cos^{2}(\theta_{k+i})\right)\right),
\end{equation*}
where $p_{1}=\mathrm{tr}(P_{1})\ge\mathrm{tr}(P_{2})=p_{2}\ge1,\theta_{j}\in[0,\pi/2]$
for $j=1,...,p_{2}$ are the principal angles between $S_{1}=\mathrm{span}(H_{1})$
and $S_{2}=\mathrm{span}(H_{2})$, $s=dim\left(S_{1}\cap S_{2}\right)$
is the number of $\theta_{j}$ that are exactly zero and $r$ is the
number of $\theta_{j}$ in the open interval $\left(0,\pi/2\right).$
\end{lemma}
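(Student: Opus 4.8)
The plan is to reduce the Frobenius norm to a trace involving the two projection matrices and then to recognize that trace as a sum of squared cosines of principal angles. First I would use the factorization $H_i=\kappa P_i$ from the main text, where each $P_i=X_i(X_i^TX_i)^{-1}X_i^T$ is a symmetric idempotent orthogonal projection with $\mathrm{tr}(P_i)=p_i$. Since $H_1-H_2=\kappa(P_1-P_2)$ is symmetric, expanding the norm gives
\begin{equation*}
\left\Vert H_1-H_2\right\Vert_F^2=\kappa^2\,\mathrm{tr}\big((P_1-P_2)^2\big)=\kappa^2\big(\mathrm{tr}(P_1)+\mathrm{tr}(P_2)-2\,\mathrm{tr}(P_1P_2)\big),
\end{equation*}
where I have used $P_i^2=P_i$ and the cyclic property $\mathrm{tr}(P_1P_2)=\mathrm{tr}(P_2P_1)$. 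This isolates all the geometric content in the single scalar $\mathrm{tr}(P_1P_2)$.

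The central step is to express $\mathrm{tr}(P_1P_2)$ through the principal angles between $S_1=\mathrm{span}(H_1)=\mathrm{span}(X_1)$ and $S_2=\mathrm{span}(H_2)=\mathrm{span}(X_2)$, which coincide with the column spaces of $X_1$ and $X_2$ since $\kappa>0$. Writing $U_1$ and $U_2$ for matrices whose columns form orthonormal bases of $S_1$ and $S_2$, we have $P_i=U_iU_i^T$, so that
\begin{equation*}
\mathrm{tr}(P_1P_2)=\mathrm{tr}\big(U_1U_1^TU_2U_2^T\big)=\big\Vert U_1^TU_2\big\Vert_F^2=\sum_{j=1}^{p_2}\sigma_j^2,
\end{equation*}
where $\sigma_1\ge\cdots\ge\sigma_{p_2}$ are the singular values of the $p_1\times p_2$ matrix $U_1^TU_2$. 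By the standard characterization of principal angles via the CS decomposition, these singular values equal the cosines of the principal angles $\theta_j$, so $\mathrm{tr}(P_1P_2)=\sum_{j=1}^{p_2}\cos^2\theta_j$. This identity is the one fact I would either cite or derive from the SVD, and it is where the bulk of the work lies; the remainder is bookkeeping.

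Finally I would sort the $p_2$ principal angles into three groups according to their position in $[0,\pi/2]$. Ordering them increasingly, the first $s=\dim(S_1\cap S_2)$ angles vanish, each contributing $\cos^2 0=1$; the next $r$ lie in the open interval $(0,\pi/2)$ and contribute $\cos^2\theta_{s+i}$ for $i=1,\dots,r$ (these are the $\theta_{k+i}$ of the statement, with $k=s$); and any remaining angle equals $\pi/2$ and contributes $\cos^2(\pi/2)=0$. Summing gives $\mathrm{tr}(P_1P_2)=s+\sum_{i=1}^{r}\cos^2\theta_{s+i}$, and substituting into the expansion of the first step yields the claimed formula. The main obstacle is not any single computation but the clean handling of the principal-angle identity together with the dimension count that pins down $s$ and $r$; once the SVD characterization is in place, the non-shared degrees-of-freedom reading follows, because $\mathrm{tr}(P_i)=p_i$ while the subtracted term $2\,\mathrm{tr}(P_1P_2)$ removes exactly the shared ($\theta_j=0$) and partially shared ($0<\theta_j<\pi/2$) directions.
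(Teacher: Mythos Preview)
Your proof is correct and follows essentially the same route as the paper: expand $\lVert H_1-H_2\rVert_F^2=\kappa^2\bigl(\mathrm{tr}(P_1)+\mathrm{tr}(P_2)-2\,\mathrm{tr}(P_1P_2)\bigr)$ and then identify $\mathrm{tr}(P_1P_2)$ with the sum of squared cosines of the principal angles. The only cosmetic difference is that the paper reads off $\mathrm{tr}(P_1P_2)$ from the eigenvalues of $P_1P_2$ by citing an external result, whereas you obtain the same identity more directly via the singular values of $U_1^TU_2$; the two characterizations are equivalent and your treatment of the $s$, $r$ bookkeeping matches the paper's.
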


By Lemma \ref{lem:nonsharedComplexity}, $\left\Vert H_{1}-H_{2}\right\Vert _{F}^{2}$
is the total complexity of the two models, $\mathrm{tr}(P_{1})+\mathrm{tr}(P_{2})$,
reduced by the $s$ completely shared dimensions and by the cosine
of the principal angles of the $r$ partially shared dimensions,
shrunk by the precision of the prior. Hence, $\mathrm{Var}(\log \mathrm{B}_{12}(y))$ is not affected by any
complexity that is shared between the models. Comparison of a dense set of nested models, e.g. variable selection in regression, is therefore expected to less prone to overconfidence since there is often a large overlap between compared models. Figure \ref{fig:modelCartoon} gives an abstract illustration of models, divergences between models and their shared/non-shared complexities.

\begin{figure*}
\centering
\includegraphics[width=.23\linewidth]{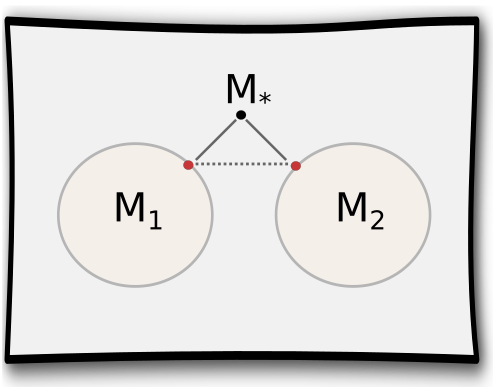}
\includegraphics[width=.23\linewidth]{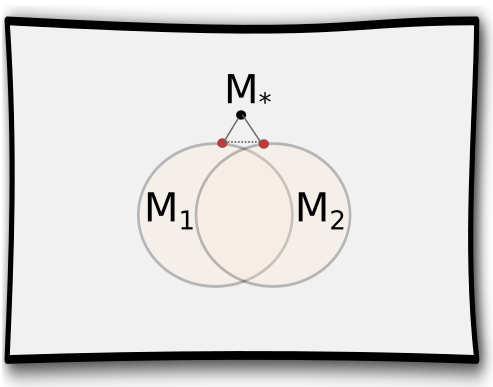}
\includegraphics[width=.23\linewidth]{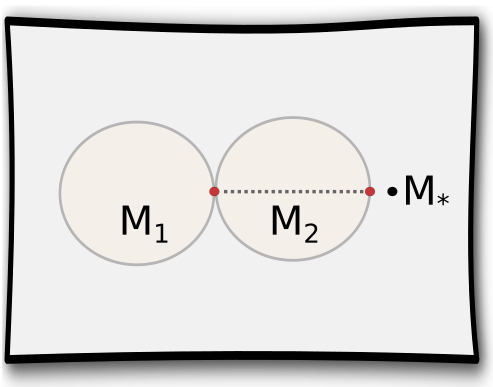}
\caption{Illustrating divergence and complexity of models. Each figure displays the data generating process ($M_*$) as a black point. The two models $M_1$ and $M_2$ are depicted as circles where the area of each circle indicates the expressiveness, or complexity, of the model, i.e. $\mathrm{tr}(H_i)$. The red points represent $\hat \mu_i$, the best approximation to $M_*$ within $M_i$. Finally, the dotted line illustrates the Kullback-Leibler divergence between the models, i.e. $\vert \vert \hat\mu_1 - \hat \mu_2 \vert \vert$. The left graph shows models with no shared complexity, the middle graph shows models with substantial shared complexity, and the graph to the right shows models without shared complexity where one of the models is closer to $M_*$.}\label{fig:modelCartoon}
\label{figure:DistanceIllustration}
\end{figure*}

Finally, both $\left\Vert \hat{\mu}_{1}-\hat{\mu}_{2}\right\Vert _{2}^{2}/\sigma^{2}$
and $\left\Vert H_{1}-H_{2}\right\Vert _{F}$ in Theorem \ref{thm:varBFLinReg} are inflated by the
error variance ratio $\sigma_{*}^{2}/\sigma^{2}$ in the expression
for $\mathrm{Var}(\log \mathrm{B}_{12}(y))$. Models that are unable to
generate enough variation in their data distribution are therefore
particularly susceptible to overconfidence.

We make the following additional remarks about Theorem \ref{thm:varBFLinReg}.

\begin{remark}
$\mathrm{Var}(\log \mathrm{B}_{12}(y))$ does not directly depend on the degree of misspecification
of the two models relative to the true $M_{*}$; only the
divergence between the models matters. However, the degree of misspecification  restricts how far apart the models can be. One way to see this is by noting that $\left\Vert \hat{\mu}_{1}-\hat{\mu}_{2}\right\Vert _{2}^{2}$ has the upper bound $$\left\Vert \hat{\mu}_{*}-\hat{\mu}_{1}\right\Vert _{2}^{2}+\left\Vert \hat{\mu}_{*}-\hat{\mu}_{2}\right\Vert _{2}^{2}+2\left\Vert \hat{\mu}_{*}-\hat{\mu}_{1}\right\Vert _{2}^{2}\left\Vert \hat{\mu}_{*}-\hat{\mu}_{2}\right\Vert _{2}^{2},$$
which under equal degree of misspecification simplifies to $$\left\Vert \hat{\mu}_{1}-\hat{\mu}_{2}\right\Vert _{2}^{2}\le4\left\Vert \hat{\mu}_{*}-\hat{\mu}_{i}\right\Vert _{2}^{2}.$$
Hence, in problems where all models are very misspecified there is greater scope for the models to approximate $p(y\vert M_*)$ in markedly different ways, and therefore greater risk of overconfidence.
\end{remark}

\begin{remark}\label{remark:unknownParameters}
Since the known variance $\sigma^2$ enters the mean and variance in Theorem \ref{thm:varBFLinReg} as a multiplicative factor $1/\sigma^2$, it is straightforward to generalize Theorem \ref{thm:varBFLinReg} to the case with a common and unknown variance by applying the law of total variance. The end result is that the factor $1/\sigma^2$ is replaced by its prior expectation, but the two main terms $\left\Vert \hat{\mu}_{1}-\hat{\mu}_{2}\right\Vert _{2}^{2}/\sigma^{2}$ and $\left\Vert H_{1}-H_{2}\right\Vert _{F}$ remain. The case with unknown and different variances seems to be intractable, but numerical experiments indicate that the same three factors are driving $\mathrm{Var}(\log \mathrm{B}_{12}(y))$.
\end{remark}

\begin{remark}
The technique behind Theorem \ref{thm:varBFLinReg} can also be used for regression models with heteroscedastic variance as the log of the marginal likelihood remains quadratic in $y$ with $I_{n}-H_{i}$ replaced by $\Sigma_{\varepsilon}^{-1/2}(I_{n}-H_{i})\Sigma_{\varepsilon}^{-1/2}$, where $\Sigma_{\varepsilon}$ is the $n\times n$ covariance matrix of errors. This framework includes the popular Gaussian process regression models in machine learning where
\begin{equation*}
y_i = x_i^T\beta + f(x),
\end{equation*}
and the function $f(x)$ follows a Gaussian process with a given covariance kernel. The log marginal likelihood is given in Equation 2.43 of \cite{rasmussen2003gaussian}.
\end{remark}

\subsection*{Multivariate response}

The previous section shows that overconfidence is a bigger concern when the compared models are approximating the data-generating process in very different ways. The DSGE models from \cite{Smets2007} are seemingly similar, however; they only differ from the baseline model by setting a single parameter to a specific value. Note however that the results in the previous section are derived for the case with a univariate response, while the DSGE models are multivariate model with seven time series responses. This subsection extends the previous results to multivariate regression and highlights some special properties for this more general case.  

The data-generating process $M_{*}$ is given by the multivariate regression

\begin{equation}
M_{*}^ {}:\qquad Y=X_{*}B_{*}+E_{*}\quad E_{*}\sim \operatorname{N}(0,I_{n},\Sigma_{*}),\label{eq:trueDGPLinReg-1}
\end{equation}
and we consider comparing the following models

\begin{align}
M_{1}^ {}:\qquad & Y=X_{1}B_{1}+E_{1}\quad E_{1}\sim \operatorname{N}(0,I_{n},\Sigma_{1})\label{eq:reg_Gaussian-1}\\
M_{2}^ {}:\qquad & Y=X_{2}B_{2}+E_{2}\quad E_{2}\sim \operatorname{N}(0,I_{n},\Sigma_{2}),\nonumber 
\end{align}
where  $\operatorname{N}(\mu,A_1,A_2)$ denotes the matrix variate distribution, $Y$ is $n\times q$ , $X_{i}$ is known and of full 
rank $p_{i}$, and $B_{i}$ is a $p_{i}\times q$ vector of unknown
parameters. The error term $E_{i}=(\varepsilon_{1i},\varepsilon_{2i},...,\varepsilon_{ni})'$
is an $n\times q$ matrix, following a matrix normal distribution, with rows that are iid $\operatorname{N}(0,\Sigma_i)$.
We let \textbf{$\beta_{i}=\mathrm{vec}(B_{i})$}, and use Zellner's g-prior
for the regression coefficients.

\begin{theorem}
\label{thm:varBFLinReg-1}The sampling mean and variance of the log
Bayes factor for the two multivariate regression models in \textup{[\ref{eq:reg_Gaussian-1}]}
with respect to the data-generating process in \textup{[\ref{eq:trueDGPLinReg-1}]}
is
\begin{align*}
\mathrm{E}(\log \mathrm{B}_{12}(y)) & =\frac{\mathrm{KL}_2-\mathrm{KL}_1}{2-\kappa}\\
&\quad+\frac{p_{1}-p_{2}}{2}\left(\log(1-\kappa)+\kappa\mathrm{tr}\left(\Sigma^{-1}\Sigma_{*}\right)\right)\\
\mathrm{Var}(\log \mathrm{B}_{12}(y)) & =\frac{1}{2}\mathrm{tr}\left(\Omega^{2}\right)\left\Vert H_{2}-H_{1}\right\Vert _{F}^{2}\\
 & \quad+\left\Vert (\hat{\mu}_{2}-\hat{\mu}_{1})\Sigma^{-1/2}\Omega^{1/2}\right\Vert _{F}^{2},
\end{align*}
where $\Omega\equiv\Sigma^{-1/2}\Sigma_{*}\Sigma^{-1/2}$ is a multivariate
generalization of the variance ratio $\sigma_{*}^{2}/\sigma^{2}$ and $\hat{\mu}_{i}=H_{i}\mu_{*}$ is the projection of the true mean vector $\mu_{*}$ onto model $M_{i}$.
\end{theorem}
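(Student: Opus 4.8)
The plan is to reuse the strategy behind Theorem~\ref{thm:varBFLinReg}: express $\log \mathrm{B}_{12}$ as a constant plus a quadratic form in the data, observe that the data are Gaussian under $M_*$, and apply the textbook mean and variance formulas for a Gaussian quadratic form. The only genuinely new ingredient is passing through vectorization to handle the matrix-variate structure.

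First I would integrate out $B_i$ under the matrix normal likelihood and the matrix-variate g-prior. Stacking columns, $\mathrm{vec}(Y)$ is marginally $\operatorname{N}(0,\Sigma_i\otimes(I_n+gP_i))$, and since $(I_n+gP_i)^{-1}=I_n-H_i$, the log marginal likelihood is again quadratic, equal to a normalizing constant minus $\tfrac12\mathrm{tr}(\Sigma_i^{-1}Y^T(I_n-H_i)Y)$. Specializing to the tractable equal-variance case $\Sigma_1=\Sigma_2=\Sigma$, the $\log|\Sigma_i|$ factors cancel and
\begin{equation*}
\log \mathrm{B}_{12}(Y)=c+\tfrac12\,\mathrm{tr}\!\left(\Sigma^{-1}Y^T(H_1-H_2)Y\right),
\end{equation*}
where $c$ collects the $(1-\kappa)$ terms. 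Using $\mathrm{tr}(Y^TMY\Sigma^{-1})=\mathrm{vec}(Y)^T(\Sigma^{-1}\otimes M)\,\mathrm{vec}(Y)$ this is the quadratic form $\tfrac12 z^TAz$ with $z=\mathrm{vec}(Y)$ and $A=\Sigma^{-1}\otimes(H_1-H_2)$, which is symmetric because $H_i=\kappa P_i$ is symmetric. Under $M_*$ the vectorized response is $z\sim\operatorname{N}(\mathrm{vec}(\mu_*),\Sigma_*\otimes I_n)$ with $\mu_*=X_*B_*$.

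I would then apply $\mathrm{E}(z^TAz)=\mathrm{tr}(AV)+m^TAm$ and $\mathrm{Var}(z^TAz)=2\,\mathrm{tr}(AVAV)+4\,m^TAVAm$ with $V=\Sigma_*\otimes I_n$ and $m=\mathrm{vec}(\mu_*)$. The mixed-product rule $(A\otimes B)(C\otimes D)=AC\otimes BD$ together with $\mathrm{tr}(A\otimes B)=\mathrm{tr}(A)\mathrm{tr}(B)$ makes every trace factorize across the two tensor slots. The first variance term becomes $\tfrac12\,\mathrm{tr}((\Sigma^{-1}\Sigma_*)^2)\,\mathrm{tr}((H_1-H_2)^2)$; here the crucial observation is that $\Sigma^{-1}\Sigma_*=\Sigma^{-1/2}\Omega\,\Sigma^{1/2}$ is similar to the symmetric $\Omega$, so $\mathrm{tr}((\Sigma^{-1}\Sigma_*)^2)=\mathrm{tr}(\Omega^2)$, while $\mathrm{tr}((H_1-H_2)^2)=\left\Vert H_1-H_2\right\Vert_F^2$ by symmetry, giving the stated first term. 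For the second term I would convert $m^TAVAm$ back to matrix form, use symmetry to get $\mu_*^T(H_1-H_2)^2\mu_*=(\hat\mu_2-\hat\mu_1)^T(\hat\mu_2-\hat\mu_1)$, and then identify $\mathrm{tr}((\hat\mu_2-\hat\mu_1)^T(\hat\mu_2-\hat\mu_1)\,\Sigma^{-1}\Sigma_*\Sigma^{-1})$ with $\left\Vert(\hat\mu_2-\hat\mu_1)\Sigma^{-1/2}\Omega^{1/2}\right\Vert_F^2$ through $\Sigma^{-1/2}\Omega\,\Sigma^{-1/2}=\Sigma^{-1}\Sigma_*\Sigma^{-1}$.

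The mean follows the same bookkeeping: $\mathrm{tr}(AV)$ yields the complexity penalty proportional to $\kappa(p_1-p_2)\mathrm{tr}(\Sigma^{-1}\Sigma_*)$, the constant $c$ supplies the $\log(1-\kappa)$ term, and the remaining piece $m^TAm=\mathrm{tr}(\mu_*^T(H_1-H_2)\mu_*\Sigma^{-1})$ must be matched to $\mathrm{KL}_2-\mathrm{KL}_1$. I expect this last identification to be the main obstacle, since it requires inserting the matrix-variate Gaussian KL formula for $\operatorname{N}(\cdot\vert\hat\mu_i,\Sigma)$ relative to $\operatorname{N}(\cdot\vert\mu_*,\Sigma_*)$ and disentangling the projection residuals from the $\Sigma^{-1}\Sigma_*$ cross terms to expose the $1/(2-\kappa)$ factor; the variance, by contrast, collapses cleanly once the Kronecker traces factor and the $\Omega$ similarity is invoked. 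A secondary point of care is tracking the number of responses $q$ through the determinant and penalty constants and confirming that the symmetry of $H_1-H_2$ needed for the quadratic-form variance formula is inherited from the shrunken projections $H_i=\kappa P_i$.
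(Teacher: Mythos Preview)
Your proposal is correct and follows essentially the same route as the paper: vectorize $Y$, write $\log\mathrm{B}_{12}$ as a constant plus a Gaussian quadratic form with $A=\Sigma^{-1}\otimes(H_1-H_2)$ and $V=\Sigma_*\otimes I_n$, apply Lemma~\ref{lem:quad_cov}, and simplify via the Kronecker mixed-product and trace rules together with $\Sigma^{-1/2}\Omega\Sigma^{-1/2}=\Sigma^{-1}\Sigma_*\Sigma^{-1}$. The paper handles the mean-to-KL identification exactly as you anticipate, by expanding $\left\Vert(\mu_*-\hat\mu_i)\Sigma^{-1/2}\right\Vert_F^2$ and solving for $\mathrm{tr}(\mu_*^TH_i\mu_*\Sigma^{-1})$ to expose the $1/(2-\kappa)$ factor, so your flagged ``main obstacle'' is indeed just bookkeeping.
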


The interpretation remains largely the same as in the univariate case,
with the added insight that not all differences in the models are
equally important due to the appearance of the generalized variance ratio $\Omega$ in $\left\Vert (\hat{\mu}_{2}-\hat{\mu}_{1})\Sigma^{-1/2}\Omega^{1/2}\right\Vert _{F}^{2}$. To show this more precisely, we perform the two spectral decompositions:
\begin{align*}
\Sigma & =U\Lambda U^{T}=\left(U\Lambda^{1/2}\right)\left(U\Lambda^{1/2}\right)^{T}\\
\Sigma_{*} & =U_{*}\Lambda_{*}U_{*}^{T}=\left(U_{*}\Lambda_{*}^{1/2}\right)\left(U_{*}\Lambda_{*}^{1/2}\right)^{T},
\end{align*}
where $U=(u_{1},...,u_{p})$ and $U_{*}=(u_{*1},...,u_{*p})$ are
matrices of eigenvectors, and $\Lambda=\mathrm{Diag}(\lambda_{1},...,\lambda_{p})$
and $\Lambda_{*}=\mathrm{Diag}(\lambda_{*1},...,\lambda_{*p})$ are
diagonal matrices of eigenvalues. Now, $\Sigma^{-1/2}=U\Lambda^{-1/2}$
and $\Omega=\Lambda^{-1/2}U^{T}U_{*}\Lambda_{*}U_{*}^{T}U\Lambda^{-1/2}$, therefore
\[
(\hat{\mu}_{2}-\hat{\mu}_{1})\Sigma^{-1/2}\Omega^{1/2}=(\hat{\mu}_{2}-\hat{\mu}_{1})U\Lambda^{-1/2}\left(\Lambda^{-1/2}U^{T}U_{*}\Lambda_{*}^{1/2}\right).
\]
The $n\times p$ matrix $\left(\hat{\mu}_{1}-\hat{\mu}_{2}\right)U\Lambda^{-1/2}$
contains the differences in predictions in the directions of the principal
components of $\Sigma$, rescaled to unit variance. The $p\times p$
matrix $\Lambda^{-1/2}U^{T}U_{*}\Lambda_{*}^{1/2}$ has element $\sqrt{\lambda_{*j}/\lambda_{i}}\cos(\phi_{u_{i},u_{*j}})$
in its $i$th row, $j$th column, where $\cos(\phi_{u_{i},u_{*j}})=u_{i}^{T}u_{*j}$
measures the degree of alignment of pairs of eigenvectors from $\Sigma$ and $\Sigma_{*}$. Hence,
$\mathrm{var}\log B_{12}(Y)$ will be large when the models make very different
prediction on linear combinations of response variables where the eigenvectors
of $\Sigma$ and $\Sigma_{*}$ align, and the variance ratio $\lambda_{*j}/\lambda_{i}$
is large. This agrees with the analysis of forecasting performance in \cite{adolfson2007forecasting} for an open-economy extension of the Smets-Wouters model. In addition, \cite{adolfson2007forecasting} show that a multivariate measure of out-of-sample forecasting performance is almost entirely driven by forecasting errors in employment, one of the least important variables from a central bank perspective.

\section*{Discussion and Conclusion}

We have demonstrated that Bayesian posterior model probabilities can be overconfident in the sense of spuriously picking out one of the compared models as the only probable model in a set of compared models, while at the same time being equally certain about another model in a slightly different dataset. We have analyzed the sources of this overconfidence by deriving the sample variance of the log Bayes factor for univariate and multivariate regression. 

The main message is that overconfidence is likely to be a problem when the compared models give very different approximations of the data-generating process and when the compared models are flexible in a way that is not shared among the models. The same is true for the multivariate setting, with the added nuance that overconfidence will be largest when the models are different with respect to specific linear combinations of the response variables.

Our results motivate several interesting avenues for future research. First, the linear regression setup was chosen since it provides a clear view of what drives overconfidence. It would be illuminating to derive similar measures for more general models to see if the same effects appear there. Second, it would of interest to repeat our analysis for other Bayesian model inference methods, for example prediction pools or stacking \cite{Geweke2012,Yao2018}. Stacking is particularly interesting since it is known to not necessarily concentrate on a single model when the sample grows large. Third, we have used the bootstrap to approximate the sampling distribution of Bayes factors. This can be very time-consuming when models are analyzed by Markov Chain Monte Carlo (MCMC), since we would have to run the MCMC for each bootstrap sample. It would therefore be of practical importance to explore the efficiency of methods where the marginal likelihood for each bootstrap sample is obtained by reweighting the posterior draws from a single MCMC run on the original dataset \cite{Geweke1999}.

\begin{figure*}
\section{Appendix A: Proofs} \label{sec_The_Between_Sample}

\subsection*{Preliminaries}
The following lemma about Gaussian quadratic forms will be useful.
\begin{lemma}
Let $A_{1}$ and $A_{2}$ be $n\times n$ symmetric, nonstochastic
matrices, and let $y$ be an $n\times1$ vector following a normal
distribution, $y\sim \operatorname{N}(\mu,\Sigma).$ Then \label{lem:quad_cov}
\begin{enumerate}
\item[i)] \setlength{\itemsep}{0.5mm}$\mathrm{E}(y^{T}A_{1}y)=\mu^{T}A_{1}\mu+\mathrm{tr}(A_{1}\Sigma);$
\item[ii)] $\mathrm{Var}(y^{T}A_{1}y)=2\mathrm{tr}((A_{1}\Sigma)^{2})+4\mu^{T}A_{1}\Sigma A_{1}\mu;$
\item[iii)] $\mathrm{Cov}(y^{T}A_{1}y,y^{T}A_{2}y)=2\mathrm{tr}(A_{1}\Sigma A_{2}\Sigma)+4\mu^{T}A_{1}\Sigma A_{2}\mu.$
\end{enumerate}
\end{lemma}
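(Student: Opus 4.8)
The plan is to establish (i) directly, then to obtain (ii) as the special case $A_1 = A_2$ of (iii), so that the real work reduces to the single covariance computation in (iii).

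For (i) I would use the ``trace trick'': since $y^T A_1 y$ is a scalar, $\mathrm{E}(y^T A_1 y) = \mathrm{E}\,\mathrm{tr}(A_1 y y^T) = \mathrm{tr}(A_1\,\mathrm{E}(yy^T))$. Substituting $\mathrm{E}(yy^T) = \Sigma + \mu\mu^T$ and using linearity together with the cyclic property of the trace gives $\mathrm{tr}(A_1\Sigma) + \mathrm{tr}(A_1\mu\mu^T) = \mathrm{tr}(A_1\Sigma) + \mu^T A_1\mu$, which is (i).

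For (iii), the key device is to whiten $y$. Writing $y = \mu + \Sigma^{1/2}z$ with $z\sim\operatorname{N}(0,I_n)$ and expanding, each quadratic form splits as
\[
y^T A_i y = c_i + 2 b_i^T z + z^T B_i z, \qquad c_i = \mu^T A_i\mu,\ b_i = \Sigma^{1/2}A_i\mu,\ B_i = \Sigma^{1/2}A_i\Sigma^{1/2}.
\]
Because constants drop out of a covariance, $\mathrm{Cov}(y^T A_1 y, y^T A_2 y)$ equals the covariance of $2b_1^T z + z^T B_1 z$ with $2b_2^T z + z^T B_2 z$, which I would expand into four pieces. The linear--linear piece is $\mathrm{Cov}(2b_1^T z, 2b_2^T z) = 4 b_1^T b_2 = 4\mu^T A_1\Sigma A_2\mu$, recovering the second term of (iii). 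The two mixed linear--quadratic pieces vanish, since they involve a third-order central moment of a mean-zero Gaussian, which is zero by symmetry.

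The only nontrivial piece, and the one I expect to be the main obstacle, is the quadratic--quadratic term $\mathrm{Cov}(z^T B_1 z, z^T B_2 z)$, for which I would invoke Isserlis' (Wick's) theorem for the fourth moments of $\operatorname{N}(0,I_n)$, namely $\mathrm{E}(z_a z_b z_c z_d) = \delta_{ab}\delta_{cd} + \delta_{ac}\delta_{bd} + \delta_{ad}\delta_{bc}$. Contracting this against $B_{1,ab}B_{2,cd}$ and summing, the first pairing yields $\mathrm{tr}(B_1)\mathrm{tr}(B_2) = \mathrm{E}(z^T B_1 z)\,\mathrm{E}(z^T B_2 z)$, which is exactly the product of means subtracted off in the covariance, while the remaining two pairings each give $\mathrm{tr}(B_1 B_2)$ (using the symmetry of the $B_i$). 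Hence $\mathrm{Cov}(z^T B_1 z, z^T B_2 z) = 2\,\mathrm{tr}(B_1 B_2)$, and the cyclic property collapses $\mathrm{tr}(B_1 B_2) = \mathrm{tr}(\Sigma^{1/2}A_1\Sigma A_2\Sigma^{1/2}) = \mathrm{tr}(A_1\Sigma A_2\Sigma)$. Adding the linear--linear contribution gives (iii). Finally, setting $A_1 = A_2$ and using $\mathrm{tr}(A\Sigma A\Sigma) = \mathrm{tr}((A\Sigma)^2)$ yields (ii). The index bookkeeping in the Isserlis contraction is the part that demands care; everything else is routine linear algebra.
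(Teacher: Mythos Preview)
Your proposal is correct and essentially self-contained. The paper proceeds slightly differently for part~(iii): it whitens without centering, setting $z=\Sigma^{-1/2}y\sim\operatorname{N}(\mu_z,I_n)$ with $\mu_z=\Sigma^{-1/2}\mu$ and $B_j=\Sigma^{1/2}A_j\Sigma^{1/2}$, and then quotes a fourth-moment identity from \cite{bao2010expectation} for $\mathrm{E}\big((z^T B_1 z)(z^T B_2 z)\big)$ with a non-zero mean, from which the covariance drops out directly. By contrast, you center first, which lets you invoke Isserlis' theorem for a mean-zero Gaussian and isolates the $4\mu^T A_1\Sigma A_2\mu$ contribution as a purely linear--linear covariance. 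Your route is more elementary---it avoids citing an external moment result and makes the symmetry of the $B_i$ (hence of the $A_i$) visibly the reason the two non-product pairings each collapse to $\mathrm{tr}(B_1B_2)$---while the paper's route is more compact but opaque about where the factor $2$ and the quadratic-in-$\mu$ term come from. Both arrive at the same formula; neither approach requires anything beyond routine linear algebra once the fourth-moment identity is in hand.
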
 
\begin{proof}
Part i) is a standard result and ii) follows from iii). To derive the covariance in iii) note that
$$\mathrm{Cov}(y^{T}A_{1}y,y^{T}A_{2}y) = \mathrm{E}\big((y^{T}A_{1}y)(y^{T}A_{2}y)\big) - \mathrm{E}(y^{T}A_{1}y)\mathrm{E}(y^{T}A_{2}y)$$
and
$$\mathrm{E}\big((y^{T}A_{1}y)(y^{T}A_{2}y)\big) = \mathrm{E}\big((z^{T}B_{1}z)(z^{T}B_{2}z)\big),$$ 
where $z = \Sigma^{-1/2}y \sim \mathrm{N}(\mu_z,I)$, $\mu_z = \Sigma^{-1/2}\mu$ and $B_{j}=\Sigma^{1/2}A_{j}\Sigma^{1/2}$. From Theorem 1 in \cite{bao2010expectation} we have 
$$\mathrm{E}\big((z^{T}B_{1}z)(z^{T}B_{2}z)\big) = E(z^{T}B_{1}z)E(z^{T}B_{2}z)+2\mathrm{tr}(B_{1}B_{2})+4\mu_{z}^{T}B_{1}B_{2}\mu_{z},$$
from which Lemma 2 iii) follows.
\end{proof}

\subsection*{Proof of Theorem \ref{thm:varBFLinReg}}
The log Bayes factor is equal to
\[
\frac{n}{2}\log\left(\frac{\sigma_{2}^{2}}{\sigma_{1}^{2}}\right)+\frac{p_{1}-p_{2}}{2}\log(1-\kappa)+Q_{2}(y)-Q_{1}(y),
\]
where $Q_{i}(y)=\frac{1}{2\sigma_{i}^{2}}y^{T}(I_n-H_{i})y$ and $H_{i}=\kappa P_{i}$
with  $P_{i}=X_{i}\left(X_{i}^{T}X_{i}\right)^{-1}X_{i}^{T}$ being a symmetric and idempotent projection matrix.
Using Part $i$ of Lemma \ref{lem:quad_cov}, we have
\begin{align*}
\mathrm{E}(\log\mathrm{B}_{12}(y))= & \frac{n}{2}\log\left(\frac{\sigma_{2}^{2}}{\sigma_{1}^{2}}\right)+\frac{p_{1}-p_{2}}{2}\log(1-\kappa) +\frac{\sigma_{*}^{2}(n-\kappa p_{2})}{2\sigma_{2}^{2}}-\frac{\sigma_{*}^{2}(n-\kappa p_{1})}{2\sigma_{1}^{2}}\\
 & +\frac{1}{2\sigma_{2}^{2}}\mu_{*}^{T}(I_{n}-H_{2})\mu_{*}-\frac{1}{2\sigma_{1}^{2}}\mu_{*}^{T}(I_{n}-H_{1})\mu_{*}
\end{align*}
since $\mathrm{tr}(\sigma_{*}^{2}(I_{n}-H_{i}))=\sigma_{*}^{2}(n-\kappa p_{i})$.
The term $\mu_{*}^{T}(I_{n}-H_{i})\mu_{*}$ can be shown to be a linearly increasing function of the Kullback-Leibler divergence of $M_{i}$ with the ideal estimate $\hat{\mu}_{i}=H_{i}\mu_{*}$ from $M_{*}$:
\begin{equation*}
\mathrm{KL}(M_{*}\vert\vert M_{i})  \equiv\int\log\left(\frac{p(y\vert\mu_{*})}{p(y\vert\hat{\mu}_{i})}\right)p(y\vert\mu_{*})dy =-\frac{n}{2}\log\left(\frac{\sigma_{*}^{2}}{\sigma_{i}^{2}}\right)+\frac{n}{2}\frac{\sigma_{*}^{2}}{\sigma_{i}^{2}}+\frac{1}{2}\frac{\left\Vert \mu_{*}-\hat{\mu}_{i}\right\Vert _{2}^{2}}{\sigma_{i}^{2}}-\frac{n}{2},
\end{equation*}
using the Kullback-Leibler divergence between two multivariate
normal densities \cite{cover2012elements}. To show the exact connection between $\mathrm{E}(\log\mathrm{B}_{12}(y))$ and $\mathrm{KL}(M_{*}\vert\vert M_{2})-\mathrm{KL}(M_{*}\vert\vert M_{1})$ we will here consider the algebraically less involved special case $\sigma_{1}=\sigma_{2}$.
For this case, the difference in KL divergences simplifies to
\[
\mathrm{KL}(M_{*}\vert\vert M_{2})-\mathrm{KL}(M_{*}\vert\vert M_{1})=
\frac{1}{2}\frac{\left\Vert \mu_{*}-\hat{\mu}_{1}\right\Vert _{2}^{2}}{\sigma^{2}}
-\frac{1}{2}\frac{\left\Vert \mu_{*}-\hat{\mu}_{2}\right\Vert _{2}^{2}}{\sigma^{2}}.
\]
Note that
\begin{equation*}
\left\Vert \mu_{*}-\hat{\mu}_{i}\right\Vert _{2}^{2} =\mu_{*}^{T}\mu_{*}+\kappa^{2}\mu_{*}^{T}P_{i}\mu_{*}^{T}-2\kappa\mu_{*}^{T}P_{i}\mu_{*}^{T}=\mu_{*}^{T}\mu_{*}-\kappa(2-\kappa)\mu_{*}^{T}P_{i}\mu_{*}^{T}
\end{equation*}

so
\[
\mu_{*}^{T}P_{i}\mu_{*}^{T}=\frac{\mu_{*}^{T}\mu_{*}-\left\Vert \mu_{*}-\hat{\mu}_{i}\right\Vert _{2}^{2}}{\kappa(2-\kappa)}.
\]
When $\sigma_{1}=\sigma_{2}$ we therefore have that
\begin{align*}
\mathrm{E}(\log\mathrm{B}_{12}(y)) &=\frac{p_{1}-p_{2}}{2}\left(\log(1-\kappa)+\kappa\frac{\sigma_{*}^{2}}{\sigma^{2}}\right)+\frac{\kappa}{2\sigma^{2}}( \mu_{*}^{T}P_{1}\mu_{*}-\mu_{*}^{T}P_{2}\mu_{*})\\
 & =\frac{p_{1}-p_{2}}{2}\left(\log(1-\kappa)+\kappa\frac{\sigma_{*}^{2}}{\sigma^{2}}\right)
 +\frac{1}{2-\kappa}\left(\mathrm{KL}(M_{*}\vert\vert M_{2})-\mathrm{KL}(M_{*}\vert\vert M_{1})\right).
\end{align*}
\end{figure*}

\begin{figure*}
By Part ii and iii of Lemma \ref{lem:quad_cov} the variance is
\begin{align*}
\mathrm{Var}(\log\mathrm{B}_{12}(y)) &={ { \mathrm{Var}(Q_{2}(y))+\mathrm{Var}(Q_{1}(y))-2\mathrm{Cov}(Q_{2}(y),Q_{1}(y))}}\\
&  =\frac{\sigma_{*}^{4}}{2\sigma_{2}^{4}}\mathrm{tr}((I_n-H_{2})^{2})+\frac{\sigma_{*}^{2}}{\sigma_{2}^{4}}\mu_{*}^{T}(I_n-H_{2})^{2}\mu_{*}
+\frac{\sigma_{*}^{4}}{2\sigma_{1}^{4}}\mathrm{tr}((I_n-H_{1})^{2})+\frac{\sigma_{*}^{2}}{\sigma_{1}^{4}}\mu_{*}^{T}(I_n-H_{1})^{2}\mu_{*}\\
&  \quad-\frac{\sigma_{*}^{4}}{\sigma_{1}^{2}\sigma_{2}^{2}}\mathrm{tr}((I_n-H_{2})(I_n-H_{1}))-\frac{2\sigma_{*}^{2}}{\sigma_{1}^{2}\sigma_{2}^{2}}\mu_{*}^{T}(I_n-H_{2})(I_n-H_{1})\mu_{*}\\
&  =\frac{\sigma_{*}^{4}}{2}\mathrm{tr}\left(\left(\frac{\left(I_n-H_{2}\right)}{\sigma_{2}^{2}}-\frac{\left(I_n-H_{1}\right)}{\sigma_{1}^{2}}\right)^{2}\right)+\sigma_{*}^{2}\left(\mu_{*}^{T}\left(\frac{\left(I_n-H_{2}\right)}{\sigma_{2}^{2}}-\frac{\left(I_n-H_{1}\right)}{\sigma_{1}^{2}}\right)^{2}\mu_{*}\right)\\
& =\frac{\sigma_{*}^{4}}{2}\mathrm{tr}\left(\left(\frac{\left(I_n-H_{2}\right)}{\sigma_{2}^{2}}-\frac{\left(I_n-H_{1}\right)}{\sigma_{1}^{2}}\right)^{2}\right)\\
&  \quad +\sigma_{*}^{2}\left(\frac{1}{\sigma_{2}^{2}}\left(\mu_{*}-\hat{\mu}_{2}\right)-\frac{1}{\sigma_{1}^{2}}\left(\mu_{*}-\hat{\mu}_{1}\right)\right)^{T}\left(\frac{1}{\sigma_{2}^{2}}\left(\mu_{*}-\hat{\mu}_{2}\right)-\frac{1}{\sigma_{1}^{2}}\left(\mu_{*}-\hat{\mu}_{1}\right)\right).
\end{align*}

Assuming that the error variances of the two misspecified models are
equal, the expression simplifies to
\begin{align*}
\mathrm{Var}(\log\mathrm{B}_{12}(y)) & { =\frac{\sigma_{*}^{4}}{2\sigma^{4}}\mathrm{tr}\left(\left(H_{1}-H_{2}\right)^{2}\right)+\frac{\sigma_{*}^{2}}{\sigma^{4}}\left(\mu_{*}^{T}\left(H_{1}-H_{2}\right)^{2}\mu_{*}\right)}\\
 & { =\left(\frac{\sigma_{*}^{2}}{\sqrt{2}\sigma^{2}}\left\Vert H_{1}-H_{2}\right\Vert _{F}\right)^{2}+\left(\frac{\sigma_{*}}{\sigma}\frac{\left\Vert \hat{\mu}_{1}-\hat{\mu}_{2}\right\Vert _{2}}{\sigma}\right)^{2}}.
\end{align*}
\end{figure*}

\begin{figure*}
\subsection{Proof of Theorem \ref{thm:varBFLinReg-1}}\label{sec:The-Between-Sample-Multi}
We derive the result for the case $\Sigma_{1}=\Sigma_{2}$. The proof
when $\Sigma_{1}\ne\Sigma_{2}$ is similar, but more tedious algebraically.
The log Bayes factor is

\begin{align*}
\log\mathrm{B}_{12}(y) & =
\frac{p_{1}-p_{2}}{2}\log(1-\kappa)  -\frac{1}{2}\mathrm{tr}\left(Y^{T}\left(I_n-H_{1}\right)Y\Sigma^{-1}\right)+\frac{1}{2}\mathrm{tr}\left(Y^{T}\left(I_n-H_{2}\right)Y\Sigma^{-1}\right)\\
& =\frac{p_{1}-p_{2}}{2}\log(1-\kappa)  -\frac{1}{2}\mathrm{tr}\left(Y^{T}\left(H_{2}-H_{1}\right)Y\Sigma^{-1}\right)\\
& =\frac{p_{1}-p_{2}}{2}\log(1-\kappa)-  \frac{1}{2}\mathrm{vec}(Y)^{T}\left(\Sigma^{-1}\otimes\left(H_{2}-H_{1}\right)\right)\mathrm{vec}(Y),
\end{align*}
where $\mathrm{vec}(Y)\sim \operatorname{N}\left(\mathrm{vec}(\mu_{*}),\Sigma_{*}\otimes I_{n}\right)$,
$\mu_{*}=X_{*}B_{*}$, and $H_{i}=\kappa P_{i}$ with $P_{i}=X_{i}\left(X_{i}^{T}X_{i}\right)^{-1}X_{i}^{T}$.
Using Lemma \ref{lem:quad_cov}, we obtain the mean as
\begin{align*}
\mathrm{E}(\log \mathrm{B}_{12}(Y)) & =\frac{p_{1}-p_{2}}{2}\log(1-\kappa)-\frac{1}{2}(\mathrm{vec}(\mu_{*}))^{T}\left(\Sigma^{-1}\otimes\left(H_{2}-H_{1}\right)\right)\mathrm{vec}(\mu_{*})-\frac{1}{2}\mathrm{tr}\left(\left(\Sigma^{-1}\otimes\left(H_{2}-H_{1}\right)\right)\Sigma_{*}\otimes I_{n}\right)\\
 & =\frac{p_{1}-p_{2}}{2}\log(1-\kappa)-\frac{1}{2}\mathrm{tr}\left(\mu_{*}^{T}\left(H_{2}-H_{1}\right)\mu_{*}\Sigma^{-1}\right)-\frac{1}{2}\mathrm{tr}\left(\Sigma^{-1}\Sigma_{*}\otimes\left(H_{2}-H_{1}\right)\right)\\
 & =\frac{p_{1}-p_{2}}{2}\log(1-\kappa)+\frac{1}{2}\mathrm{tr}\left(\mu_{*}^{T}H_{1}\mu_{*}\Sigma^{-1}\right)-\frac{1}{2}\mathrm{tr}\left(\mu_{*}^{T}H_{2}\mu_{*}\Sigma^{-1}\right)-\frac{1}{2}\mathrm{tr}\left(\Sigma^{-1}\Sigma_{*}\right)\mathrm{tr}\left(H_{2}-H_{1}\right)\\
 & =\frac{p_{1}-p_{2}}{2}\log(1-\kappa)+\frac{1}{2}\mathrm{tr}\left(\mu_{*}^{T}H_{1}\mu_{*}\Sigma^{-1}\right)-\frac{1}{2}\mathrm{tr}\left(\mu_{*}^{T}H_{2}\mu_{*}\Sigma^{-1}\right)-\frac{1}{2}\mathrm{tr}\left(\Sigma^{-1}\Sigma_{*}\right)\kappa\left(p_{2}-p_{1}\right)
\end{align*}
Similarly to the proof of the univariate case in Theorem \ref{thm:varBFLinReg}, we can relate the terms $\mathrm{tr}\left(\mu_{*}^{T}H_{i}\mu_{*}\Sigma^{-1}\right)$
to the Kullback-Leibler divergence between $p(Y\vert\mu_{*})$ and
$p(Y|\hat{\mu}_{i})$. The Kullback-Leibler divergence of $p(Y|\hat{\mu}_{i})$ from $p(Y\vert\mu_{*})$ can be derived by employing the usual KL
divergence between multivariate normals in \cite{cover2012elements} to $p(\mathrm{vec}(Y)\vert\mathrm{vec}(\mu_{*}))$
and $p(\mathrm{vec}(Y)|\mathrm{vec}(\hat{\mu}_{i}))$ to obtain

\begin{align*}
\mathrm{KL}(M_{*}\vert\vert M_{i}) & =\frac{1}{2}\left(\log\frac{\left|\Sigma_{i}\otimes I_{n}\right|}{\left|\Sigma_{*}\otimes I_{n}\right|}-nd+\mathrm{tr}\left((\Sigma_{i}^{-1}\otimes I_{n})(\Sigma_{*}\otimes I_{n})\right)+(\mathrm{vec}(\mu_{*})-\mathrm{vec}(\hat{\mu}_{i}))^{T}(\Sigma_{i}^{-1}\otimes I_{n})(\mathrm{vec}(\mu_{*})-\mathrm{vec}(\hat{\mu}_{i}))\right)\\
 & =\frac{1}{2}\left(n\log\frac{\left|\Sigma_{i}\right|}{\left|\Sigma_{*}\right|}-nd+n\mathrm{tr}\left(\Sigma_{i}^{-1}\Sigma_{*}\right)+\mathrm{tr}\left((\mu_{*}-\hat{\mu}_{i})^{T}(\mu_{*}-\hat{\mu}_{i})\Sigma_{i}^{-1}\right)\right)\\
 & =\frac{1}{2}\left(-n\log\frac{\left|\Sigma_{*}\right|}{\left|\Sigma_{i}\right|}-nd+n\mathrm{tr}\left(\Sigma_{i}^{-1}\Sigma_{*}\right)+\left\Vert (\mu_{*}-\hat{\mu}_{i})\Sigma_{i}^{-1/2}\right\Vert _{F}^{2}\right),
\end{align*}
where $\left\Vert (\mu_{*}-\hat{\mu}_{i})\Sigma_{i}^{-1/2}\right\Vert _{F}^{2}$
is the Mahalanobis distance with respect to $\Sigma_{i}$. Analogous calculations as in the univariate case gives
\[
\mathrm{tr}\left(\mu_{*}^{T}H_{i}\mu_{*}\Sigma^{-1}\right)=\frac{\mathrm{tr}\left(\mu_{*}^{T}\mu_{*}\Sigma^{-1}\right)-\left\Vert (\mu_{*}-\hat{\mu}_{i})\Sigma_{i}^{-1/2}\right\Vert _{F}^{2}}{2-\kappa}
\]
and hence we can express the mean as
\begin{align*}
\mathrm{E}(\log \mathrm{B}_{12}(Y)) & =\frac{p_{1}-p_{2}}{2}\log(1-\kappa)+\frac{\frac{1}{2}\left\Vert (\mu_{*}-\hat{\mu}_{2})\Sigma_{2}^{-1/2}\right\Vert _{F}^{2}-\frac{1}{2}\left\Vert (\mu_{*}-\hat{\mu}_{1})\Sigma_{1}^{-1/2}\right\Vert _{F}^{2}}{2-\kappa}-\frac{1}{2}\mathrm{tr}\left(\Sigma^{-1}\Sigma_{*}\right)\kappa\left(p_{2}-p_{1}\right)\\
 & =\frac{\mathrm{KL}(M_{*}\vert\vert M_{2})-\mathrm{KL}(M_{*}\vert\vert M_{1})}{2-\kappa}+\frac{p_{1}-p_{2}}{2}\left(\log(1-\kappa)+\kappa\mathrm{tr}\left(\Sigma^{-1}\Sigma_{*}\right)\right).
\end{align*}

\begin{align*}
\mathrm{Var}(\log \mathrm{B}_{12}(Y)) & =\frac{1}{2}\mathrm{tr}\left\{ \left(\Sigma^{-1}\otimes(H_{2}-H_{1})\right)\left(\Sigma_{*}\otimes I_{n}\right)\right\} ^{2}+\mathrm{vec}(\mu_{*})^{T}\left(\Sigma^{-1}\otimes(H_{2}-H_{1})\right)\left(\Sigma_{*}\otimes I_{n}\right)\left(\Sigma^{-1}\otimes(H_{2}-H_{1})\right)\mathrm{vec}(\mu_{*})\\
 & =\frac{1}{2}\mathrm{tr}\left(\Sigma^{-1}\Sigma_{*}\Sigma^{-1}\Sigma_{*}\otimes(H_{2}-H_{1})^{2}\right)+\mathrm{vec}(\mu_{*})^{T}\left(\Sigma^{-1}\Sigma_{*}\Sigma^{-1}\otimes(H_{2}-H_{1})^{2}\right)\mathrm{vec}(\mu_{*})\\
 & =\frac{1}{2}\mathrm{tr}\left(\Sigma^{-1}\Sigma_{*}\Sigma^{-1}\Sigma_{*}\right)\left\Vert H_{2}-H_{1}\right\Vert _{F}^{2}+\mathrm{tr}\left(\mu_{*}^{T}(H_{2}-H_{1})^{2}\mu_{*}\Sigma^{-1}\Sigma_{*}\Sigma^{-1}\right)\\
 & =\frac{1}{2}\mathrm{tr}\left(\Omega^{2}\right)\left\Vert H_{2}-H_{1}\right\Vert _{F}^{2}+\mathrm{tr}\left(\left((\hat{\mu}_{2}-\hat{\mu}_{1})\Sigma^{-1/2}\right)^{T}\left((\hat{\mu}_{2}-\hat{\mu}_{1})\Sigma^{-1/2}\right)\Omega\right)\\
 & =\frac{1}{2}\mathrm{tr}\left(\Omega^{2}\right)\left\Vert H_{2}-H_{1}\right\Vert _{F}^{2}+\left\Vert (\hat{\mu}_{2}-\hat{\mu}_{1})\Sigma^{-1/2}\Omega^{1/2}\right\Vert _{F}^{2}
\end{align*}
where $\Omega\equiv\Sigma^{-1/2}\Sigma_{*}\Sigma^{-1/2}$ is a multivariate
generalization of the variance ratio $\sigma_{*}^{2}/\sigma^{2}$.

\end{figure*}

\begin{figure*}
\subsection*{Proof of Lemma \ref{lem:nonsharedComplexity}}

We first state the following definition from \cite{Galanti2008}.

\begin{definition}
Let $S_{1},S_{2}\subset\mathbb{R^{\mathbb{\mathrm{n}}}}$
be subspaces with $p_{1}=\mathrm{dim}(S_{1})\ge\mathrm{dim}(S_{2})=p_{2}\ge1.$
The principal angles $\theta_{k}\in\left[0,\pi/2\right]$ between
$S_{1}$ and $S_{2}$ are recursively defined for $k=1,...,p_{2}$
by 
\[
\cos(\theta_{k})=\underset{u\in S_{1}}{\max}\enskip\underset{v\in S_{2}}{\max}|u^{T}v|=u_{k}^{T}v_{k},||u||=||v||=1,
\]
subject to the constraints 
\[
u_{i}^{T}u=0,\enskip v_{i}^{T}v=0,\enskip i=1,...,k-1.
\]
\end{definition}

Now,
\begin{equation*}
    \mathrm{tr}(H_{1}-H_{2})^{2}=\kappa^2 \mathrm{tr}(P_{1}-P_{2})^{2}=\kappa^2
    \big(\mathrm{tr}(P_{1})+\mathrm{tr}(P_{2})-2\mathrm{tr}(P_{1}P_{2})\big)=\kappa^2\big(p_{1}+p_{2}-2\sum_{i=1}^{n}\lambda_{i}\big),
\end{equation*}
where $\lambda_{i}$ are the eigenvalues of $P_{1}P_{2}.$ Theorem
34 in \cite{Galanti2008} proves that $\mathrm{eig}(P_{1}P_{2})=(1_{s},\cos^{2}\theta_{s+i}\:(i=1,...,r),0_{n-s-r})$
where $1_s$ is an $s \times 1$ vector of ones, $0_{n-s-r}$ is an $(n-s-r) \times 1$ vector of zeroes,  $s=\dim(S_{1}\cap S_{2})$ is the number of $\theta_{j}$ which
are exactly zero, and $r$ is the number of $\theta_{j}$ in the open
interval $(0,\pi/2)$. Hence
\begin{equation*}
\mathrm{tr}(P_{1}-P_{2})^{2}=\kappa^2\bigg(p_{1}+p_{2}-2\big(s+\sum_{i=1}^{r}\cos^{2}(\theta_{k+i})\big)\bigg).
\end{equation*}

\end{figure*}

\acknow{Please include your acknowledgments here, set in a single paragraph. Please do not include any acknowledgments in the Supporting Information, or anywhere else in the manuscript.}


\bibliography{my_collection}

\begin{thebibliography}{10}

\bibitem{brooks2011handbook}
S Brooks, A Gelman, G Jones, XL Meng, {\em Handbook of {Markov Chain Monte
  Carlo}}.
\newblock (CRC press), (2011).

\bibitem{doucet2000sequential}
A Doucet, S Godsill, C Andrieu, On sequential {Monte} {Carlo} sampling methods
  for {Bayesian} filtering.
\newblock {\em\protect\JournalTitle{Statistics and computing}} \textbf{10},
  197--208 (2000).

\bibitem{griffiths2004finding}
TL Griffiths, M Steyvers, Finding scientific topics.
\newblock {\em\protect\JournalTitle{Proceedings of the National academy of
  Sciences}} \textbf{101}, 5228--5235 (2004).

\bibitem{Smets2007}
F Smets, R Wouters, {Shocks and frictions in US business cycles: A Bayesian
  DSGE approach}.
\newblock {\em\protect\JournalTitle{American Economic Review}} (2007).

\bibitem{stephan2009bayesian}
KE Stephan, WD Penny, J Daunizeau, RJ Moran, KJ Friston, Bayesian model
  selection for group studies.
\newblock {\em\protect\JournalTitle{Neuroimage}} \textbf{46}, 1004--1017
  (2009).

\bibitem{raftery1995bayesian}
AE Raftery, Bayesian model selection in social research.
\newblock {\em\protect\JournalTitle{Sociological methodology}} \textbf{25},
  111--164 (1995).

\bibitem{johnson2013revised}
VE Johnson, Revised standards for statistical evidence.
\newblock {\em\protect\JournalTitle{Proceedings of the National Academy of
  Sciences}} \textbf{110}, 19313--19317 (2013).

\bibitem{Hoeting1999}
JA Hoeting, D Madigan, AE Raftery, CT Volinsky, {Bayesian Model Averaging: A
  Tutorial}.
\newblock {\em\protect\JournalTitle{Statistical Science}} \textbf{14}, 382--417
  (1999).

\bibitem{Bernardo1994}
JM Bernardo, AF Smith, {\em {Bayesian Theory}}.
\newblock p. 604 (1994).

\bibitem{Gelfand1994}
AE Gelfand, DK Dey, {Bayesian model choice: asymptotics and exact
  calculations}.
\newblock {\em\protect\JournalTitle{Journal of the Royal Statistical Society
  Series B}} (1994).

\bibitem{geweke1999using}
J Geweke, Using simulation methods for {Bayesian} econometric models:
  inference, development, and communication.
\newblock {\em\protect\JournalTitle{Econometric reviews}} \textbf{18}, 1--73
  (1999).

\bibitem{Vehtari+Ojanen:2012}
A Vehtari, J Ojanen, A survey of {Bayesian} predictive methods for model
  assessment, selection and comparison.
\newblock {\em\protect\JournalTitle{Statistics Surveys}} \textbf{6}, 142--228
  (2012).

\bibitem{fong2019marginal}
E Fong, C Holmes, On the marginal likelihood and cross-validation.
\newblock {\em\protect\JournalTitle{arXiv preprint arXiv:1905.08737}} (2019).

\bibitem{Yao2018}
Y Yao, A Vehtari, D Simpson, A Gelman, Using stacking to average {B}ayesian
  predictive distributions (with discussion).
\newblock {\em\protect\JournalTitle{Bayesian Analysis}} \textbf{13}, 917--1003
  (2018).

\bibitem{li2019comparing}
M Li, DB Dunson, Comparing and weighting imperfect models using
  d-probabilities.
\newblock {\em\protect\JournalTitle{Journal of the American Statistical
  Association}}, 1--26 (2019).

\bibitem{Yang2018a}
Z Yang, T Zhu, {Bayesian selection of misspecified models is overconfident and
  may cause spurious posterior probabilities for phylogenetic trees}.
\newblock {\em\protect\JournalTitle{Proceedings of the National Academy of
  Sciences}} \textbf{115}, 1854--1859 (2018).

\bibitem{huggins2019using}
JH Huggins, JW Miller, Using bagged posteriors for robust inference and model
  criticism.
\newblock {\em\protect\JournalTitle{arXiv preprint arXiv:1912.07104}} (2019).

\bibitem{leff2008cortical}
AP Leff, et~al., The cortical dynamics of intelligible speech.
\newblock {\em\protect\JournalTitle{Journal of Neuroscience}} \textbf{28},
  13209--13215 (2008).

\bibitem{herbst2015bayesian}
EP Herbst, F Schorfheide, {\em Bayesian estimation of DSGE models}.
\newblock (Princeton University Press), (2015).

\bibitem{politis1994stationary}
DN Politis, JP Romano, The stationary bootstrap.
\newblock {\em\protect\JournalTitle{Journal of the American Statistical
  association}} \textbf{89}, 1303--1313 (1994).

\bibitem{friston2003dynamic}
KJ Friston, L Harrison, W Penny, Dynamic causal modelling.
\newblock {\em\protect\JournalTitle{Neuroimage}} \textbf{19}, 1273--1302
  (2003).

\bibitem{ashburner2014spm12}
J Ashburner, et~al., Spm12 manual.
\newblock {\em\protect\JournalTitle{Wellcome Trust Centre for Neuroimaging,
  London, UK}}, 2464 (2014).

\bibitem{Kass1995}
RE Kass, AE Raftery, {Bayes factors}.
\newblock {\em\protect\JournalTitle{Journal of the American Statistical
  Association}} (1995).

\bibitem{ruppert2003semiparametric}
D Ruppert, MP Wand, RJ Carroll, {\em Semiparametric regression}.
\newblock (Cambridge university press) No.{}~12, (2003).

\bibitem{Berk1966}
RH Berk, {Limiting Behavior of Posterior Distributions when the Model is
  Incorrect}.
\newblock {\em\protect\JournalTitle{The Annals of Mathematical Statistics}}
  \textbf{37}, 51--58 (1966).

\bibitem{fernandez2004comparing}
J Fern\'{a}ndez-Villaverde, JF Rubio-Ram\'{i}rez, Comparing dynamic equilibrium
  models to data: a bayesian approach.
\newblock {\em\protect\JournalTitle{Journal of Econometrics}} \textbf{123},
  153--187 (2004).

\bibitem{draper1995assessment}
D Draper, Assessment and propagation of model uncertainty.
\newblock {\em\protect\JournalTitle{Journal of the Royal Statistical Society:
  Series B (Methodological)}} \textbf{57}, 45--70 (1995).

\bibitem{Hastie2009}
T Hastie, R Tibshirani, J Friedman, {\em {The Elements of Statistical
  Learning}}.
\newblock (2009).

\bibitem{rasmussen2003gaussian}
CE Rasmussen, Gaussian processes in machine learning in {\em Summer School on
  Machine Learning}.
\newblock (Springer), pp. 63--71 (2003).

\bibitem{adolfson2007forecasting}
M Adolfson, J Lind{\'e}, M Villani, Forecasting performance of an open economy
  {DSGE} model.
\newblock {\em\protect\JournalTitle{Econometric Reviews}} \textbf{26}, 289--328
  (2007).

\bibitem{Geweke2012}
J Geweke, G Amisano, {Prediction with misspecified models}.
\newblock {\em\protect\JournalTitle{American Economic Review}} \textbf{102},
  482--486 (2012).

\bibitem{Geweke1999}
J Geweke, {Using Simulation Methods for Bayesian Econometric Models: Inference,
  Development and Communication}.
\newblock {\em\protect\JournalTitle{Econometric Revies}} \textbf{18}, 1--126
  (1999).

\bibitem{bao2010expectation}
Y Bao, A Ullah, Expectation of quadratic forms in normal and nonnormal
  variables with applications.
\newblock {\em\protect\JournalTitle{Journal of Statistical Planning and
  Inference}} \textbf{140}, 1193--1205 (2010).

\bibitem{cover2012elements}
TM Cover, JA Thomas, {\em Elements of information theory}.
\newblock (John Wiley \& Sons), (2012).

\bibitem{Galanti2008}
A Galanti, {Subspaces, angles and pairs of orthogonal projections}.
\newblock {\em\protect\JournalTitle{Linear and Multilinear Algebra}}
  \textbf{56}, 227--260 (2008).

\end{thebibliography}

\end{document}